\theoremstyle{plain}
\newtheorem{thm}{{\bf Theorem}}[section]
\newtheorem{cor}[thm]{{\bf  Corollary}}
\newtheorem{prop}[thm]{{\bf Proposition}}
\newtheorem{claim}[thm]{{\bf Claim}}
\theoremstyle{definition}
\newtheorem{define}[thm]{{\bf Definition}}
\newtheorem{note}[thm]{{\bf Note}}
\newtheorem{question}[thm]{{\bf Question}}
\newcommand{\p}{\mathcal{P}}
\newcommand{\seq}[1]{\langle {#1} \rangle}
\newcommand{\om}{\omega}
\newcommand{\bbP}{\mathbb{P}}
\newcommand{\calL}{\mathcal{L}}
\newcommand{\ZF}{\mathsf{ZF}}
\newcommand{\AC}{\mathsf{AC}}
\title[Variants of \L o\'s's Theorem]{Variants of \L o\'s's Theorem}
\author[T. Usuba]{Toshimichi Usuba}
\address[T. Usuba]
{Faculty of Science and Engineering,
Waseda University, 
Okubo 3-4-1, Shinjyuku, Tokyo, 169-8555 Japan}
\email{usuba@waseda.jp}
\keywords{Axiom of Choice, \L o\'s's theorem, Ultraproduct, Ultrapower}
\subjclass[2020]{Primary 03C20, 03E25}
\begin{document}

\begin{abstract}
We study \L o\'s's theorem in a choiceless context.
We introduce some variants of \L o\'s's theorem.
These variants seem weaker than \L o\'s's theorem, but
we prove that these are equivalent to \L o\'s's theorem.
\end{abstract}
\maketitle

\section{Introduction}
\emph{\L o\'s's theorem} is a fundamental theorem of
ultrapowers and ultraproducts.
First we recall it. 
Throughout this paper, a \emph{filter} over a set means a proper filter,
and a \emph{structure} does a first order structure with some language.
Let $U$ be an ultrafilter over a set $I$, and
$\{M_i \mid i\in I\}$ an indexed family of structures with language $\calL$.
For $f \in \prod_{i \in I} M_i$, let $[f]$ denote the 
equivalence class of $f$ modulo $U$.
Let $\prod_{i \in } M_i/U$ denote the ultraproduct of the family $\{M_i \mid i \in I\}$ by $U$.
If every $M_i$ is the same to the structure $M$,
then the ultraproduct $\prod_{i \in I} M_i/U$ is denoted by ${}^I M/U$, it is the ultrapower of $M$ by $U$.
\begin{thm}[\L o\'s's  fundamental theorem of ultraproducts \cite{L}]
For every $\calL$-formula $\varphi(v_0,\dotsc, v_n)$ and
$f_0,\dotsc, f_n \in \prod_{i \in I} M_i$,
\begin{align*}
\prod_{i \in I} M_i /U & \models \varphi([f_0],\dotsc, [f_n])\\
&\iff \{ i \in I \mid M_i \models \varphi(f_0(i),\dotsc, f_n(i))\} \in U.
\end{align*}
\end{thm}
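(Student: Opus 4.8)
The plan is to prove the equivalence by induction on the structure of the $\calL$-formula $\varphi$, organised in the three familiar stages.

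First I would record a preliminary sub-induction on terms: for every $\calL$-term $t(v_0,\dotsc,v_n)$ and all $f_0,\dotsc,f_n\in\prod_{i\in I}M_i$,
\[
t^{\prod_{i\in I}M_i/U}([f_0],\dotsc,[f_n])=\bigl[\seq{t^{M_i}(f_0(i),\dotsc,f_n(i)):i\in I}\bigr].
\]
The base cases (a variable, or a constant symbol) are immediate from the definition of the ultraproduct, and the inductive step for a function symbol holds because function symbols are interpreted coordinatewise in the ultraproduct.

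Next comes the atomic case. For $t_0=t_1$ the equivalence is just the definition of equality modulo $U$ combined with the term computation above; for a relation symbol $R$, the equivalence for $R(t_0,\dotsc,t_k)$ follows the same way from the definition of $R^{\prod_{i\in I}M_i/U}$. Only the filter axioms are used here (and for $=$ not even those). I would then handle the connectives: for $\neg\psi$ I use the ultrafilter dichotomy that for every $A\subseteq I$ exactly one of $A$ and $I\setminus A$ belongs to $U$; for $\psi_0\wedge\psi_1$ I use that $U$ is closed under finite intersections and supersets. For $\exists v\,\psi(v,v_0,\dotsc,v_n)$, the direction from the ultraproduct to the index set is routine: a witness $[g]$ on the left gives, by the induction hypothesis, a set in $U$ on which $\psi$ holds coordinatewise, and that set is contained in $\{i\in I:M_i\models\exists v\,\psi(v,f_0(i),\dotsc,f_n(i))\}$.

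The hard part will be the converse of the existential step. Assuming $A=\{i\in I:M_i\models\exists v\,\psi(v,f_0(i),\dotsc,f_n(i))\}\in U$, I must produce a single $g\in\prod_{i\in I}M_i$ with $\{i\in I:M_i\models\psi(g(i),f_0(i),\dotsc,f_n(i))\}\supseteq A$, after which the induction hypothesis closes the case. This is exactly where the argument calls on the axiom of choice: for each $i\in A$ the set $\{x\in M_i:M_i\models\psi(x,f_0(i),\dotsc,f_n(i))\}$ is non-empty, and choosing $g(i)$ from it (and setting $g(i)$ to be any fixed element of $M_i$ for $i\notin A$) requires a choice function for the family $\seq{\{x\in M_i:M_i\models\psi(x,f_0(i),\dotsc,f_n(i))\}:i\in A}$. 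Under $\ZFC$ this is harmless; in the choiceless setting of this paper it is precisely the step that may fail, which is what makes it worthwhile to isolate weaker forms of the theorem and ask whether they already imply the full statement.
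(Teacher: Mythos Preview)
The paper does not give its own proof of this statement: it is quoted as background (\L o\'s's original result, with a citation to \cite{L}) and never argued for in the text. So there is nothing to compare your argument against on the paper's side.

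That said, your proposal is the standard and correct proof. The induction on terms, the atomic case, the Boolean connectives via the ultrafilter axioms, and the two directions of the existential step are all handled properly. One small point: for $i\notin A$ you say ``any fixed element of $M_i$''; to avoid a second appeal to choice here you should take $g(i)=f_0(i)$, which is available since $f_0\in\prod_{i\in I}M_i$ is already given. Your explicit isolation of the choice function needed in the $\exists$-step is exactly the observation that drives the rest of the paper, so your write-up meshes well with the surrounding material even though the paper itself treats the theorem as a black box.
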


In \cite{U}, we proved that some variants of \L o\'s's theorem 
and weak choice principles are equivalent to \L o\'s's theorem in $\ZF$:

\begin{thm}[\cite{U}]
In $\ZF$, the following are equivalent:
\begin{enumerate}
\item \L o\'s's theoreom, that is, 
for every family $\{M_i \mid i \in I\}$ of structures with same language and 
ultrafilter $U$ over $I$, if $\prod_{i \in I} M_i \neq \emptyset$ then
\L o\'s's fundamental theorem holds for the ultraproduct $\prod_{ i \in I} M_i/U$.
\item For every structure $M$ and ultrafilter $U$ over $I$, the ultrapower ${}^I M/U$ is  elementarily equivalent to $M$.
\item For every structure $M$ and ultrafilter $U$ over $I$, \L o\'s's fundamental theorem holds for the ultrapower ${}^I M/U$.
\item $U$-$\mathsf{AC}_I$ holds (Karagila-Yuan \cite{KY}) for every ultrafilter $U$ over $I$.
Where, for an ultrafilter $U$ over $I$, $U$-$\mathsf{AC}_I$ is the assertion 
that for every indexed family $\{A_i \mid i \in I\}$ of non-empty sets,
there is a function $f$ on $I$ such that $\{i \in I \mid f(i) \in A_i\} \in U$.
\end{enumerate}
\end{thm}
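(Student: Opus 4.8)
The plan is to establish the cycle of implications $(1)\Rightarrow(3)\Rightarrow(2)\Rightarrow(4)\Rightarrow(1)$, of which only $(2)\Rightarrow(4)$ needs a genuine idea. \emph{The routine implications.} For $(1)\Rightarrow(3)$: given a structure $M$ with language $\calL$ and an ultrafilter $U$ over $I$, apply $(1)$ to the constant family $M_i=M$ ($i\in I$); since $M$ is nonempty, its constant functions witness $\prod_{i\in I}M_i={}^{I}M\neq\emptyset$, so $(1)$ yields \L o\'s's fundamental theorem for ${}^{I}M/U$. For $(3)\Rightarrow(2)$: apply the biconditional of \L o\'s's fundamental theorem for ${}^{I}M/U$ to an arbitrary $\calL$-sentence $\varphi$; the set $\{i\in I\mid M\models\varphi\}$ equals $I$ if $M\models\varphi$ and $\emptyset$ otherwise, so, $U$ being a proper ultrafilter, ${}^{I}M/U\models\varphi$ if and only if $M\models\varphi$, i.e.\ ${}^{I}M/U\equiv M$.

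\emph{The implication $(4)\Rightarrow(1)$.} Fix a family $\{M_i\mid i\in I\}$ with $\prod_{i\in I}M_i\neq\emptyset$ (hence each $M_i\neq\emptyset$) and an ultrafilter $U$ over $I$, and induct on the complexity of the $\calL$-formula $\varphi$. The atomic case and the Boolean connectives go through already in $\ZF$: the interpretations of relation symbols, function symbols, and equality in an ultraproduct are defined precisely so that the atomic biconditional holds independently of the chosen representatives, and the steps for $\lnot$ and $\land$ use only that $U$ is an ultrafilter. The one place choice is used is the step for $\exists y\,\varphi$. Assuming $S=\{i\mid M_i\models\exists y\,\varphi(f_0(i),\dotsc,f_n(i),y)\}\in U$, put $A_i=\{a\in M_i\mid M_i\models\varphi(f_0(i),\dotsc,f_n(i),a)\}$ for $i\in S$ and $A_i=M_i$ for $i\notin S$; by $U$-$\mathsf{AC}_I$ applied to $\{A_i\mid i\in I\}$ we obtain a function $h$ with $\{i\mid h(i)\in A_i\}\in U$, and repairing $h$ off $\{i\mid h(i)\in M_i\}$ by means of a fixed $g\in\prod_{i\in I}M_i$ we get $h'\in\prod_{i\in I}M_i$ with $\{i\mid M_i\models\varphi(f_0(i),\dotsc,f_n(i),h'(i))\}\in U$. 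The inductive hypothesis then gives $\prod_{i\in I}M_i/U\models\varphi([f_0],\dotsc,[f_n],[h'])$, hence $\prod_{i\in I}M_i/U\models\exists y\,\varphi$; the reverse direction of this step is choice-free.

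\emph{The implication $(2)\Rightarrow(4)$, the heart of the matter.} Given a family $\{A_i\mid i\in I\}$ of nonempty sets, I would encode it into one structure $M$: let the universe of $M$ be the disjoint union $I\sqcup N$, where $N=\{(i,a)\mid i\in I,\ a\in A_i\}$, and equip $M$ with a unary predicate $P$ naming the copy of $I$ and a unary function $g$ given by $g(i)=i$ for $i\in I$ and $g((i,a))=i$. Since every $A_i$ is nonempty, $M\models\sigma$ for the sentence $\sigma$ defined by $\forall x\,(P(x)\to\exists y\,(\lnot P(y)\land g(y)=x))$, so by $(2)$ also ${}^{I}M/U\models\sigma$. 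Let $\delta\colon I\to M$ be the diagonal map $\delta(i)=i$. Since $\{i\in I\mid M\models P(\delta(i))\}=I\in U$ (a choice-free, atomic computation in the ultrapower), ${}^{I}M/U\models P([\delta])$, and instantiating $\sigma$ at $[\delta]$ yields some $[h]\in{}^{I}M/U$ with ${}^{I}M/U\models\lnot P([h])\land g([h])=[\delta]$. Unwinding the interpretations of $P$, $g$, and equality — all built into the ultrapower and requiring no choice — the set $S=\{i\mid h(i)\in N\}\cap\{i\mid g(h(i))=i\}$ lies in $U$, and for $i\in S$ necessarily $h(i)=(i,a_i)$ with $a_i\in A_i$. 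Setting $f(i)=a_i$ for $i\in S$ and $f(i)$ arbitrary otherwise gives a function on $I$ with $\{i\mid f(i)\in A_i\}\supseteq S\in U$, i.e.\ $U$-$\mathsf{AC}_I$ holds. The subtlety to watch here is exactly that the argument invokes \L o\'s's theorem for ${}^{I}M/U$ only at the level of elementary equivalence (for the sentence $\sigma$) and of atomic formulas and terms (for $P$, $g$, equality, and negation against $U$), never at the existential quantifier; otherwise the reasoning would be circular, since the existential step is precisely what $U$-$\mathsf{AC}_I$ is there to supply.
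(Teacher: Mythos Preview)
Your proof is correct. The cited theorem is not reproved in the present paper (it is quoted from \cite{U}), but the key step $(2)\Rightarrow(4)$ is essentially the argument the paper gives for $(2)\Rightarrow(1)$ of Theorem~\ref{thm1}: encode $\{A_i\}$ into a single structure in which the $\Pi_2$ sentence ``every index has a witness'' holds, transfer it to the ultrapower by elementary equivalence, instantiate at the diagonal, and decode using only the choice-free atomic and Boolean clauses of \L o\'s. The only cosmetic difference is that the paper uses a binary relation $R(x,y)\iff x\in I\wedge y\in A_x$, whereas you use a unary projection function $g$; both work equally well. It is worth observing, as the paper does in Theorem~\ref{thm1}, that your argument actually only uses that $\sigma$ is $\Pi_2$, so $(2)$ can be weakened to $\Sigma_2$-elementary equivalence.
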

The \emph{ultrapower embedding} $j:M \to {}^I M/U$ is the map
defined by $j(x)=[c_x]$ for $x \in M$,
where $c_x:I \to M$  is the constant map with value $x$.
Under \L o\'s's theorem, the ultrapower embedding is an elementary embedding.
\begin{thm}[\cite{U}, (1) $\iff$ (4) is Bilinsky (\cite{EB})]\label{B}
 In $\ZF$, the following are equivalent:
\begin{enumerate}
\item \L o\'s's theorem.
\item For every structure $M$ and ultrafilter $U$ over $I$, 
the ultrapower embedding $j:M \to {}^IM /U$ is an elementary embedding.
\item For every structure $M$ and ultrafilter $U$ over $I$, 
if the ultrapower embedding $j:M \to {}^I M /U $ is
an elementary embedding, 
then \L o\'s's fundamental theorem holds for ${}^I M/U$.
\item For every structure $M$ and ultrafilter $U$ over $I$, if 
${}^I M/U$ is elementarily equivalent to $M$,
then the ultrapower embedding $j:M \to {}^IM /U$ is an elementary embedding.
\end{enumerate}
\end{thm}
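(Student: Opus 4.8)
The plan is to prove $(1)\Leftrightarrow(2)$ directly, to observe that $(1)$ trivially implies $(3)$ and $(4)$, to invoke Bilinsky \cite{EB} for $(4)\Rightarrow(1)$, and to prove $(3)\Rightarrow(1)$ by contraposition; the last step carries all the new content.

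For $(1)\Rightarrow(2)$: under \L o\'s's theorem, \L o\'s's fundamental theorem holds for every ${}^IM/U$ (note $\prod_{i\in I}M\neq\emptyset$, as it contains the constant maps). Applying it to a formula $\varphi(v_0,\dots,v_n)$ with the constant maps $c_{a_0},\dots,c_{a_n}$ as parameters, the \L o\'s set $\{i\in I\mid M\models\varphi(a_0,\dots,a_n)\}$ is $I$ or $\emptyset$, hence lies in the proper filter $U$ iff $M\models\varphi(a_0,\dots,a_n)$; thus $M\models\varphi(\bar a)\iff{}^IM/U\models\varphi(j(\bar a))$, i.e.\ $j$ is elementary. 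For $(2)\Rightarrow(1)$: an elementary $j\colon M\to{}^IM/U$ gives, read on sentences, $M\equiv{}^IM/U$, so every ultrapower is elementarily equivalent to its base, which by the equivalence $(1)\Leftrightarrow(2)$ of the theorem of \cite{U} quoted above is \L o\'s's theorem. And $(1)\Rightarrow(3)$, $(1)\Rightarrow(4)$ are immediate: under \L o\'s's theorem, \L o\'s's fundamental theorem holds for every ultrapower and (by $(1)\Rightarrow(2)$) $j$ is always elementary, so the conclusions of $(3)$ and $(4)$ hold unconditionally; $(4)\Rightarrow(1)$ is Bilinsky \cite{EB}.

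For $(3)\Rightarrow(1)$ I prove the contrapositive: if \L o\'s's theorem fails, I exhibit $M$ and $U$ over $I$ with $j\colon M\to{}^IM/U$ elementary while \L o\'s's fundamental theorem fails for ${}^IM/U$, so $(3)$ fails at $(M,U)$. By the equivalence $(1)\Leftrightarrow(4)$ of the theorem of \cite{U}, failure of \L o\'s's theorem gives an ultrafilter $U$ over a set $I$ (necessarily non-principal, as $U$-$\mathsf{AC}_I$ holds for principal $U$) with $U$-$\mathsf{AC}_I$ false, witnessed by non-empty sets $\langle A_i\mid i\in I\rangle$ admitting no $f$ with $\{i\mid f(i)\in A_i\}\in U$. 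Replacing $A_i$ by $A_i\times\om\times\bbZ$ (harmless, since a $U$-choice function for the new family projects to one for the old) we may assume each $A_i$ carries a fixed-point-free bijection $s_i$ with infinitely many orbits, all infinite, and---crucially for later---that in the ultrapower a single ``selector'' $i\mapsto g(i)\in A_{h(i)}$ can be perturbed, by shifting the $\om$- or $\bbZ$-coordinate (operations definable without choice), into infinitely many selectors lying in infinitely many distinct $s_i$-orbits. Now let $B=\bigsqcup_{i\in I}A_i$, take $E$ countably infinite, and let $M$ have underlying set $I\sqcup E\sqcup B$ with a projection $\pi$ collapsing each $A_i$ to $i$ and fixing $I\cup E$ pointwise, together with an operation $s$ acting on each $A_i$ by $s_i$ and fixing $I\cup E$.

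That \L o\'s's fundamental theorem fails for ${}^IM/U$ is easy: with the parameter $[\mathrm{id}_I]$ (the class of the inclusion $I\hookrightarrow M$), the formula $\exists y\,(\pi(y)=v_0\wedge\pi(y)\neq y)$ has \L o\'s set $\{i\mid A_i\neq\emptyset\}=I\in U$, yet a witness $[g]$ in ${}^IM/U$ would be a map $i\mapsto g(i)\in A_i$ on a set in $U$, and there is none. The main obstacle is to show $j$ is elementary; by a Tarski--Vaught argument it suffices that every existential formula over parameters from $j(M)$ which holds in ${}^IM/U$ has a witness of the form $j(b)$. A naive ``fibres over $I$'' structure fails here, because ${}^IM/U$ acquires anomalous $\pi$-fibres---for instance the trivial fibre over $[\mathrm{id}_I]$---which falsify $\Pi_2$-sentences true in $M$. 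The set $E$ is included so that ``every $\pi$-fixed point has a non-trivial preimage'' already fails in $M$; the operation $s$ with the extra coordinates forces every non-trivial $\pi$-fibre of ${}^IM/U$ to be a free $\bbZ$-set with infinitely many orbits, exactly as in $M$; and the factor $\om$ absorbs the remaining finite-counting invariants. Using that $U$ is an ultrafilter (so every finite partition of $I$ has exactly one block in $U$), one then verifies that the complete diagram of $M$ over $j(M)$ is realised in ${}^IM/U$ as expected, so that no formula with parameters from $j(M)$ separates $M$ from ${}^IM/U$ and $j$ is elementary. Since \L o\'s's fundamental theorem nevertheless fails for ${}^IM/U$, this refutes $(3)$ and completes the proof.
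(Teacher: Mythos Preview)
The paper does not prove this theorem; it is quoted from \cite{U}, with $(1)\Leftrightarrow(4)$ attributed to Bilinsky \cite{EB}. So there is no proof in the present paper to compare against, and I assess your argument on its own merits.

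Your treatment of $(1)\Leftrightarrow(2)$, $(1)\Rightarrow(3)$, $(1)\Rightarrow(4)$, and the appeal to Bilinsky for $(4)\Rightarrow(1)$ is correct and standard.

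The gap is in $(3)\Rightarrow(1)$. The strategy---assume $U$-$\mathsf{AC}_I$ fails for some $U$ and build $M$ with $j$ elementary while \L o\'s's fundamental theorem fails for ${}^IM/U$---is reasonable, and the construction is set up so that the obvious obstructions vanish. But the decisive claim, that $j\colon M\to{}^IM/U$ is elementary, is not proved. You write ``one then verifies that the complete diagram of $M$ over $j(M)$ is realised in ${}^IM/U$ as expected,'' and this sentence is exactly where the entire content of the implication lives. A Tarski--Vaught verification requires showing that for every formula $\varphi(x,\bar a)$ with parameters $\bar a$ from $j(M)$ satisfied by some element of ${}^IM/U$, there is a witness already in $j(M)$. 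For elements such as $[\mathrm{id}_I]$---a $\pi$-fixed, $s$-fixed point with trivial fibre lying outside $j(M)$---and more generally for classes $[f]$ with $f\colon I\to I$ for which $(A_{f(i)})_i$ admits no $U$-selector, this demands a real argument: a quantifier-elimination result for the theory of $(M;\pi,s)$, a back-and-forth analysis, or an explicit family of automorphisms of ${}^IM/U$ fixing finite subsets of $j(M)$ and moving such elements into $j(M)$. Your remarks about the roles of $E$, the free $\mathbb{Z}$-action, and the $\omega$-coordinate explain why certain low-complexity sentences do not separate $M$ from ${}^IM/U$, but they do not establish elementarity of $j$. Until that verification is carried out in full, $(3)\Rightarrow(1)$ remains a sketch rather than a proof.
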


In this paper, we study more variants of \L o\'s's theorem.
Variants which we will consider seem weaker than \L o\'s's theoreom,
but we prove that these are in fact equivalent to \L o\'s's theoreom in $\ZF$.

First we prove that $U$-$\AC_I$ can be characterized by
bounded forms of \L o\'s's fundamental theorem:
\begin{thm}\label{thm1}
In $\ZF$,
let $U$ be an ultrafilter over a set $I$.
Then the following are equivalent:
\begin{enumerate}
\item $U$-$\mathsf{AC}_I$ holds.
\item For every structure $M$,
the ultrapower ${}^I M/U$ is $\Sigma_2$-elementarily equivalent to $M$.
\item For every family $\{M_i \mid i \in I\}$ of pairwise isomorphic structures with same language,
if $\prod_{i \in I} M_i \neq \emptyset$ then
$\prod_{i \in I} M_i/U$ is $\Sigma_1$-elementarily equivalent to $M_i$ for every $i \in I$.
\item For every two structures $M$ and $N$ with same language,
if there are elementary embeddings $j:M \to N$ and $j':N \to M$,
then ${}^I M/U$ and ${}^I N/U$ are $\Sigma_2$-elementarily equivalent.
\item For every two families $\{M_i \mid i \in I\}$, $\{N_i \mid i \in I\}$ of structures with same language,
if each $M_i$ is isomorphic to 
$N_i$ and both 
$\prod_{i \in I} M_i$ and 
$\prod_{i \in I} N_i$ are non-empty,
then $\prod_{i \in I} M_i/U$ is $\Sigma_1$-elementarily 
equivalent to 
$\prod_{i \in I} N_i/U$.
\item For every two families $\{M_i \mid i \in I\}$,
$\{N_i \mid i \in I\}$ of structures  with  same language,
if each $M_i$ is isomorphic to 
$N_i$ and both 
$\prod_{i \in I} M_i$ and 
$\prod_{i \in I} N_i$ are non-empty,
then $\prod_{i \in I} M_i/U$ is isomorphic to 
$\prod_{i \in I} N_i/U$.
\end{enumerate}
\end{thm}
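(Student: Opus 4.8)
The plan is to prove Theorem~\ref{thm1} by establishing a cycle of implications. The anchor is the known fact (Theorem~\ref{B} and Theorem~\cite{U}) that $U$-$\AC_I$ for \emph{all} ultrafilters is equivalent to full \L o\'s's theorem; but here $U$ is fixed, so I must be careful to keep the entire argument local to the single ultrafilter $U$ over $I$. I would arrange the implications roughly as $(1)\Rightarrow(6)\Rightarrow(5)\Rightarrow(3)\Rightarrow(1)$ for the ``family'' versions, and $(1)\Rightarrow(2)\Rightarrow(1)$ together with $(1)\Rightarrow(4)\Rightarrow(1)$ for the ``single-structure'' versions, so that each of $(2)$--$(6)$ sits on a short path to and from $(1)$.

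First I would do $(1)\Rightarrow(6)$, the strongest positive direction. Given $U$-$\AC_I$ and families $\{M_i\}$, $\{N_i\}$ with $M_i\cong N_i$, the point is to choose isomorphisms coherently: the set $A_i$ of all isomorphisms $M_i\to N_i$ is non-empty, so by $U$-$\AC_I$ there is a function $f$ with $X=\{i:f(i)\text{ is an isomorphism }M_i\to N_i\}\in U$. On $X$ the maps $f(i)$ induce a bijection $[g]\mapsto[i\mapsto f(i)(g(i))]$ from $\prod M_i/U$ to $\prod N_i/U$; one checks it is well-defined (using that $U$ is a filter so the relevant agreement sets intersect into $U$) and is an isomorphism of $\calL$-structures, using that on $X$ each $f(i)$ preserves all atomic relations and functions. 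Then $(6)\Rightarrow(5)$ is immediate since an isomorphism preserves all formulas, in particular $\Sigma_1$ ones. For $(5)\Rightarrow(3)$, take pairwise isomorphic $M_i$ with $\prod M_i\neq\emptyset$; fix $j\in I$ and set $N_i=M_j$ for all $i$, so $\prod N_i={}^I M_j\neq\emptyset$ via a constant function and $M_i\cong N_i$. By $(5)$, $\prod M_i/U$ is $\Sigma_1$-elementarily equivalent to ${}^I M_j/U$, so it suffices to know ${}^I M_j/U$ is $\Sigma_1$-elementarily equivalent to $M_j$; I would get this from a direct argument: the ultrapower embedding always preserves $\Sigma_0$ formulas upward and downward trivially from the definition of the ultrapower relations, and it preserves $\Sigma_1$ formulas \emph{downward} for free, while the upward direction ($M_j\models\exists x\,\psi \Rightarrow {}^IM_j/U\models\exists x\,\psi$) uses a single witness in $M_j$, hence a constant function --- no choice needed. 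This gives $(5)\Rightarrow(3)$.

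The crux is $(3)\Rightarrow(1)$, and here I expect the main obstacle. Given a family $\{A_i:i\in I\}$ of non-empty sets, I need to manufacture pairwise isomorphic structures whose $\Sigma_1$-equivalence with $M_i$ forces a choice function mod $U$. The natural move: let $\theta$ be large enough that all $A_i$ and $U$ lie in $V_\theta$, and consider the structure $\mathfrak{A}=(V_\theta,\in,\langle A_i:i\in I\rangle, I, U,\dots)$ with the sequence of the $A_i$'s as a distinguished predicate; set $M_i=\mathfrak{A}$ for all $i$ (so trivially pairwise isomorphic, and $\prod M_i\neq\emptyset$). In ${}^I\mathfrak{A}/U$ the element $[\mathrm{id}]$, where $\mathrm{id}(i)=i$, is ``the $i$'' and, by the \L o\'s-style computation of atomic formulas on the nose, $[\langle A_i\rangle]$ is a function whose value at $[\mathrm{id}]$ is some $[h]$; the sentence ``there is a set which is a member of $A_i$'' is $\Sigma_1$ (an existential over $V_\theta$), true in $\mathfrak{A}$ for the real $i$... but the subtlety is that I must transfer a \emph{uniform} statement, so instead I would use the $\Sigma_1$ sentence $\exists f\,(f$ is a function with domain $I$ and $\forall i\in I\,f(i)\in A_i)$ interpreted in ${}^I\mathfrak{A}/U$, and argue that a witness $[F]$ there yields, via the atomic computation $[F](i)\in A_i$ for $U$-many $i$, an actual function $i\mapsto F(i)(i)$ landing in $A_i$ on a $U$-large set; pulling this back requires knowing the sentence is true in the ultrapower, which $(3)$ gives from its truth in $\mathfrak{A}$ --- but in $\mathfrak{A}$ such an $f$ exists by $\AC$ \emph{within} $V_\theta$ only if the sequence $\langle A_i\rangle$ is a set, which it is. So the real work is checking that $\Sigma_1$-preservation (not full elementarity) suffices, i.e.\ that extracting the choice function from $[F]$ needs only atomic facts about $[F]$, which it does.

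Finally $(1)\Rightarrow(2)$: from $U$-$\AC_I$ one gets full \L o\'s's theorem for the ultrapower ${}^IM/U$ (this is the fixed-$U$ content of Theorem~\cite{U}, or one reproves it directly: $U$-$\AC_I$ is exactly what is needed to choose witnesses for existential quantifiers in the \L o\'s induction), so in particular $\Sigma_2$-equivalence holds --- indeed full equivalence. For $(2)\Rightarrow(1)$, I would run essentially the same coding structure $\mathfrak{A}=(V_\theta,\in,\langle A_i:i\in I\rangle)$ and observe that the statement ``the distinguished sequence admits a choice function'' can be written as a $\Sigma_2$ sentence (or even $\Sigma_1$, but $\Sigma_2$ gives room); $\Sigma_2$-equivalence of ${}^I\mathfrak{A}/U$ with $\mathfrak{A}$ then pushes the (true in $\mathfrak{A}$) statement into the ultrapower, and the atomic computation extracts a genuine choice function mod $U$ as before. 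The pair $(1)\Leftrightarrow(4)$ is analogous: $(1)\Rightarrow(4)$ follows because $(1)$ gives full \L o\'s's theorem so both ${}^IM/U$ and ${}^IN/U$ are elementarily equivalent to $M$ and to $N$ respectively, and $M\equiv N$ follows from the mutual elementary embeddings (mutual elementary embeddability implies elementary equivalence), hence they are $\Sigma_2$-equivalent; and $(4)\Rightarrow(1)$ is obtained by taking $N=M=\mathfrak{A}$ with $j=j'=\mathrm{id}$, reducing to the same coding argument. Throughout, the only genuinely delicate points are (a) keeping everything relative to the single fixed $U$, and (b) verifying that the coding sentence witnessing a choice function really is low enough in the $\Sigma_n$ hierarchy and that its witness can be decoded using atomic information alone --- that is where I would spend the care.
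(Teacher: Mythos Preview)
Your forward implications $(1)\Rightarrow(2),(4),(6)$ and the reductions $(6)\Rightarrow(5)\Rightarrow(3)$ are fine, but the backward directions collapse at the same point: you assume that the $\Sigma_1$ sentence ``the distinguished sequence $\langle A_i\rangle$ admits a choice function'' is \emph{true in $\mathfrak{A}=V_\theta$}. We are in $\ZF$; the whole problem is that this sentence may be false in $V$ (hence in $V_\theta$), so there is nothing to push into the ultrapower. Your remark that ``such an $f$ exists by $\AC$ within $V_\theta$'' is exactly what is not available. The paper avoids this by using the $\Pi_2$ sentence $\forall x\,(I(x)\to\exists y\,R(x,y))$, which is true in the base structure simply because each $A_i$ is non-empty; $\Sigma_2$-equivalence then transfers this $\Pi_2$ fact to the ultrapower, where instantiating $x$ with $[\mathrm{id}]$ yields a $[g]$ with $\{i:g(i)\in A_i\}\in U$. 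For $(3)\Rightarrow(1)$ only $\Sigma_1$-equivalence is available, so even this $\Pi_2$ trick fails; the paper instead pads each $A_i$ to $B_i=A_i\times V_\theta$ so that the $B_i$ become pairwise equipotent, lets $M_i=\langle\bigcup_j B_j;\,B_i\rangle$ (pairwise isomorphic with $\prod_i M_i\neq\emptyset$), and then the $\Sigma_1$ sentence $\exists x\,R_i(x)$ is true in each $M_i$ for the trivial reason $B_i\neq\emptyset$ and transfers to the ultraproduct to produce a partial choice.

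Your $(4)\Rightarrow(1)$ is separately broken: setting $M=N=\mathfrak{A}$ with $j=j'=\mathrm{id}$ makes the conclusion of $(4)$ say that ${}^I\mathfrak{A}/U$ is $\Sigma_2$-equivalent to \emph{itself}, which is vacuous and does not reduce to $(2)$. The paper's proof of $(4)\Rightarrow(1)$ is the most delicate part of the theorem: one must build genuinely different $M$ and $N$ that elementarily embed into each other, arranged so that in $N$ the $\Pi_2$ sentence above can be witnessed in ${}^IN/U$ \emph{without choice} (via a uniformly definable point $\langle p,i,n\rangle$), while in $M$ witnessing it at $[\mathrm{id}]$ forces a $U$-partial choice function for the $A_i$. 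The $\Sigma_2$-equivalence of the two ultrapowers then does the work. This construction is the missing idea in your outline.
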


It is known that 
$U$-$\AC_I$ holds if, and only if,
for every family $\{M_i \mid i \in I\}$ of structures with same language,
\L o\'s's fundamental theorem holds for the ultraproduct $\prod_{i \in I} M_i/U$ (e.g., see \cite{U}).
As an immediate consequence, we have:
\begin{thm}\label{thm2}
In $\ZF$, the following are equivalent:
\begin{enumerate}
\item \L o\'s's theorem.
\item For every structure $M$ and  ultrafilter $U$ over $I$,
the ultrapower ${}^I M/U$ is $\Sigma_2$-elementarily equivalent to $M$.
\item For every family $\{M_i \mid i \in I\}$ of pairwise isomorphic structures with same language 
and  ultrafilter $U$ over $I$, if $\prod_{i \in I} M_i \neq \emptyset$ then
$\prod_{i \in I} M_i/U$ is $\Sigma_1$-elementarily equivalent to $M_i$ for every $i \in I$.
\item For every two structures $M$ and $N$ with same language and
ultrafilter $U$ over $I$,
if there are elementary embeddings $j:M \to N$ and $j':N \to M$,
then ${}^I M/U$ and ${}^I N/U$ are $\Sigma_2$-elementarily equivalent.
\item For every two families $\{M_i \mid i \in I\}$ 
and $\{N_i \mid i \in I\}$ of structures with same language
and ultrafilter $U$ over $I$, 
if each $M_i$ is isomorphic to 
$N_i$ and both 
$\prod_{i \in I} M_i$ and 
$\prod_{i \in I} N_i$ are non-empty,
then $\prod_{i \in I} M_i/U$ is $\Sigma_1$-elementarily 
equivalent to 
$\prod_{i \in I} N_i/U$.
\item For every two families $\{M_i \mid i \in I\}$ 
and $\{N_i \mid i \in I\}$ of structures  with  same language
and  ultrafilter $U$ over $I$, 
if each $M_i$ is isomorphic to 
$N_i$ and both 
$\prod_{i \in I} M_i$ and 
$\prod_{i \in I} N_i$ are non-empty,
then $\prod_{i \in I} M_i/U$ is isomorphic to 
$\prod_{i \in I} N_i/U$.
\end{enumerate}
\end{thm}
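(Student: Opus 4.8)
The plan is to obtain Theorem~\ref{thm2} as a formal consequence of Theorem~\ref{thm1} by universally quantifying the ultrafilter parameter. Fix notation: for an ultrafilter $U$ over a set $I$ and an index $k$ with $2 \le k \le 6$, let $P_k(U)$ denote the statement that is clause~$(k)$ of Theorem~\ref{thm1} for that particular $U$. Inspecting the two lists, clause~$(k)$ of Theorem~\ref{thm2} is literally the sentence ``$P_k(U)$ holds for every set $I$ and every ultrafilter $U$ over $I$'', since the only difference between the lists is that in Theorem~\ref{thm2} the phrase ``and ultrafilter $U$ over $I$'' has been pulled out in front as a universal quantifier. So nothing needs to be proved about the internal content of these clauses beyond what Theorem~\ref{thm1} already gives.

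The steps I would carry out are: (i) for each $k \in \{2,\dotsc,6\}$ and each ultrafilter $U$ over a set $I$, apply Theorem~\ref{thm1} to get $P_k(U) \iff U\text{-}\AC_I$; (ii) quantify over all such $U$ and $I$, so that clause~$(k)$ of Theorem~\ref{thm2} becomes equivalent to the schema ``$U\text{-}\AC_I$ holds for every ultrafilter $U$ over every set $I$''; (iii) recognize this schema, via the equivalence $(1)\iff(4)$ in the theorem of \cite{U} recalled above, as being exactly \L o\'s's theorem, i.e.\ clause~$(1)$ of Theorem~\ref{thm2}. Chaining (i)--(iii) shows each of $(2)$--$(6)$ equivalent to $(1)$, hence all six clauses are pairwise equivalent.

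I do not expect a genuine obstacle here: the statement is billed as an immediate consequence, and the mathematical substance lies entirely in Theorem~\ref{thm1} together with the already-established equivalence between \L o\'s's theorem and the schema $\{U\text{-}\AC_I : U \text{ an ultrafilter over a set } I\}$. The one point deserving a sentence of care is that Theorem~\ref{thm1} is stated for a single fixed $U$, so one should note explicitly that the reduction in step~(ii) proceeds clause-by-clause and requires no uniformity across distinct ultrafilters: each instance of $P_k$ is disposed of by its own instance of Theorem~\ref{thm1}, and conjoining these instances over all pairs $(I,U)$ introduces nothing new.
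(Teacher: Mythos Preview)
Your proposal is correct and matches the paper's own argument: the paper states Theorem~\ref{thm2} as ``an immediate consequence'' of Theorem~\ref{thm1} together with the known equivalence between \L o\'s's theorem and the schema ``$U$-$\AC_I$ holds for every ultrafilter $U$ over every set $I$''. Your care about applying Theorem~\ref{thm1} instance-by-instance and then conjoining over all pairs $(I,U)$ is exactly what the word ``immediate'' is hiding.
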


We also study a generic version of \L o\'s's theorem.
It is known that, in $\ZF$, \L o\'s's theorem is not provable,
and \L o\'s's theorem does not imply the Axiom of Choice $\AC$.
Unlike this, we show that a generic version of  \L o\'s's theorem
is equivalent to $\AC$ in $\ZF$.

\section{Proof of Theorem \ref{thm1}}
We start the proof of Theorem \ref{thm1}.
First we prove (1) $\iff$ (2) $\iff$ (3).
\begin{proof}[Proof of (1) $\iff$ (2)]
The direction (1) $\Rightarrow$ (2) is clear.

(2) $\Rightarrow$ (1). Take a family $\{A_i \mid i \in I\}$ of non-empty sets.
We shall find a function $f$ on $I$ such that
$\{i \in I \mid f(i) \in A_i\} \in U$.
Now we may assume that the family $\{A_i \mid i \in I\}$ is pairwise disjoint and
$I \cap \bigcup_{i \in I} A_i=\emptyset$.
Consider the structure $M=\seq{I \cup \bigcup_{i \in I} A_i; I, R}$,
where $I$ is identified with a unary predicate,
and $R$ is a binary relation defined by
$R(x,y) \iff x \in I$ and $y \in A_x$.
It is clear that the $\Pi_2$-sentence $\forall x(I(x) \to \exists y R(x,y))$ holds in $M$.
Let $M^*=\seq{M^*; I^*, R^*}$ be the ultrapower of $M$ by $U$,
where $I^*$ and $R^*$ are the relations corresponding  to $I$ and $R$.
By the assumption (2), $M^*$ is $\Sigma_2$-elementarily equivalent to $M$.
Hence $\forall x(I^*(x) \to \exists y R^*(x,y))$ holds in $M^*$.
Let $f:I \to I$ be the identity function.
We have $I^*([f])$, hence there is $[g] \in M^*$
such that $R^*([f], [g])$.
Then $\{i \in I \mid R(i,g(i))\} \in U$  by the 
construction of $M^*$,
so $\{i \in I \mid g(i) \in A_i\} \in U$.
\end{proof}

\begin{proof}[Proof of (1) $\iff$ (3)]
(1) $\Rightarrow$ (3) is clear.

(3) $\Rightarrow$ (1).
Take a family $\{A_i \mid i \in I\}$ of non-empty sets.
We may assume that $\{A_i \mid i \in I\}$ is a pairwise disjoint family.
Take a large limit ordinal $\theta$ with $A_i \in V_\theta$ for every $i \in I$.
Let $B_i=A_i \times V_\theta$.
Clearly there is an injection from $V_\theta$ into $B_i$, and
since $\theta$ is limit with $A_i \in V_\theta$,
we have $B_i \subseteq V_\theta$.
Thus for each $i, j \in I$ there is a bijection from $B_i$ onto $B_j$.
Let $S=\bigcup_{i \in I}B_i$, and
for $i \in I$, let $R_i=B_i$.
Let $M_i$ be the structure $\seq{S; R_i}$.
A $\Sigma_1$-sentence $\exists x R_i(x)$ holds in $M_i$.
\begin{claim}
For $i, j \in I$, $M_i$ is isomorphic to $M_j$.
\end{claim}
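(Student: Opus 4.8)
The plan is to exhibit the isomorphism explicitly as a permutation of the common underlying set $S$. Note that $M_i$ and $M_j$ have the very same universe $S$, and their only non-logical symbol is a unary predicate, interpreted as $B_i$ in $M_i$ and as $B_j$ in $M_j$. Hence a map $\pi\colon S\to S$ is an isomorphism from $M_i$ onto $M_j$ exactly when it is a bijection with $\pi[B_i]=B_j$. So it suffices to produce such a $\pi$; when $i=j$ the identity works, so assume $i\neq j$.

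The construction rests on two facts. First, every block $B_k$ is equinumerous with $V_\theta$: since $\theta$ is a limit ordinal with $A_k\in V_\theta$ we have $B_k=A_k\times V_\theta\subseteq V_\theta$ (as already noted in the setup), and since $A_k\neq\emptyset$ there is an injection $V_\theta\to B_k$, namely $v\mapsto(a,v)$ for some $a\in A_k$. By the Cantor--Schr\"oder--Bernstein theorem, which is provable in $\ZF$, this yields a bijection $b_k\colon B_k\to V_\theta$, and then $h:=b_j^{-1}\circ b_i$ is a bijection from $B_i$ onto $B_j$. Since this is invoked only for the two fixed indices $i,j$, no appeal to choice is needed to obtain these bijections; indeed the only purpose of the harmless factor $V_\theta$ in the definition of $B_k$ is precisely to force all blocks to be equinumerous even though the sets $A_k$ themselves need not be well-orderable.

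Second, because the family $\{A_k\mid k\in I\}$ was arranged to be pairwise disjoint, the blocks $B_k=A_k\times V_\theta$ are pairwise disjoint as well, and $S=\bigcup_{k\in I}B_k$. This lets me glue: define $\pi\colon S\to S$ to agree with $h$ on $B_i$, with $h^{-1}$ on $B_j$, and with the identity on $B_k$ for $k\neq i,j$. Disjointness of the blocks makes $\pi$ well defined, and $\pi$ is a bijection of $S$ since it restricts to a bijection $B_i\to B_j$, a bijection $B_j\to B_i$, and the identity on the union of the remaining blocks; moreover $\pi[B_i]=B_j$. Therefore $\pi$ is an isomorphism from $M_i$ onto $M_j$, which proves the claim. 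There is no serious obstacle here: the one point requiring care is to keep the argument within $\ZF$, and the two facts above handle that, since Cantor--Schr\"oder--Bernstein is choice-free and only finitely many explicit bijections are ever used.
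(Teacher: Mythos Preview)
Your proof is correct and follows essentially the same approach as the paper: fix a bijection $h\colon B_i\to B_j$ and define $\pi$ on $S$ to be $h$ on $B_i$, $h^{-1}$ on $B_j$, and the identity elsewhere. The paper takes the existence of such a bijection as already established just before the claim, whereas you spell out the Cantor--Schr\"oder--Bernstein step and the choice-free nature of the argument in more detail, but the construction of the isomorphism is identical.
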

\begin{proof}
Fix a bijection $f:B_i \to B_j$.
Define $\pi:S \to S$ as follows.
For $x \in S$,
\[
\pi(x)=\begin{cases}
x  & \text{if $x \notin B_i \cup B_j$}.\\
f(x) & \text{if $x \in B_i$.}\\
f^{-1}(x) & \text{if $x \in B_j$.}
\end{cases}
\]
$\pi$ is a bijection from $S$ onto $S$ with $\pi``B_i=B_j$.
Hence $\pi$ is an isomorphism between $M_i$ and $M_j$.
\end{proof}
Let $M^*=\seq{M^*;R^*}$ be the ultraproduct $\prod_{i \in I} M_i/U$.
By the assumption (3),
$\exists x R^*(x)$ holds in $M^*$.
Pick $[f] \in M^*$ with
$R^*([f])$.
Then $X=\{i \in I \mid R_i(f(i))\} \in U$.
For $i \in X$, by the definition of $R_i$,
$f(i)$ is an element of $A_i \times V_\theta$.
Let $g(i)$ be the first coordinate of $f(i)$. 
We have $\{i \in I \mid g(i) \in A_i\} \in U$.
%
%
%
%
%
%
%
\end{proof}

Next we prove (1)$\iff$(4).

\begin{proof}[Proof of (1) $\iff$ (4)]
It is clear that (1) $\Rightarrow$ (4).
For (4) $\Rightarrow$ (1), 
take a family $\{A_i \mid i \in I\}$ of non-empty sets.
We may assume that $\{A_i \mid i \in I\}$ is a pairwise disjoint family
and $I \cap \bigcup_{i \in I} A_i=\emptyset$.
We shall construct two structures $M$ and $N$ such that
$N$ is an elementary substructure of $M$ and 
there is an elementary embedding $j:M \to N$.

First, let $A'_i=A_i\times \om$.
For $n<\om$ and $i \in I$, let $A_{i,n}'=A_{i}' \times \{i\} \times \{n\}$.
Fix a point $p \notin I \cup (I \times \om)
\cup \bigcup_{i \in I} A'_i \cup \bigcup_{n<\om, i \in I_n} A_{i,n}'$,
and let $B_{i,n}=A'_{i,n} \cup \{ \seq{p,i,n} \}$.
Let $M=I \cup (I \times \om) \cup \bigcup_{i \in I} A'_i \cup \bigcup_{i \in I, n<\om} B_{i,n}$ and 
$N=(I \times \om) \cup \bigcup_{i \in I, n<\om} B_{i,n}$.
We identify $M$ and $N$ as structures $\seq{M;I_0, R_0}$ and $\seq{N; I_0, R_1}$,
where $I_0=I \cup (I \times \om)$,
$I_1=I \times \om$ are unary predicates, and 
\begin{itemize}
\item $R_0(x,y) \iff x \in I$ and $y \in A_x'$, or $x=\seq{i,n} \in I \times \om$
and $y \in B_{i,n}$.
\item $R_1(x,y) \iff x =\seq{i,n}\in I \times \om$ and $y \in B_{i,n}$.
\end{itemize}
Note that $N$ is a substructure of $M$.
\begin{claim}
$N$ is an elementary substructure of $M$.
\end{claim}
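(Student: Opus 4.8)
The plan is to establish the claim through the Tarski--Vaught test: it suffices to show that for every formula $\varphi(v,v_1,\dots,v_k)$ and every tuple $\bar{a}=(a_1,\dots,a_k)\in N^k$, if some $b\in M$ satisfies $M\models\varphi(b,\bar{a})$ then some $b'\in N$ does as well. (That $N$ is a substructure is already noted; the point is just that $N\cap I=\emptyset$, so the first clause in the definition of $R_0$ never applies to a pair of elements of $N$, whence $R_0$ restricted to $N$ equals $R_1$.)

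So fix $\varphi$, $\bar{a}\in N^k$, and $b\in M$ with $M\models\varphi(b,\bar{a})$; we may assume $b\in M\setminus N$. Using the pairwise disjointness of the pieces $I$, $I\times\om$, the $A'_i$ and the $B_{i,n}$ (guaranteed by the usual normalizations), one checks that $M\setminus N=I\cup\bigcup_{i\in I}A'_i$, so $b$ is attached to a unique index $i_0\in I$: either $b=i_0$, or $b\in A'_{i_0}$. The central construction is, for each $n<\om$, an automorphism $\sigma_n$ of $M$ that swaps $i_0$ with the pair $\seq{i_0,n}$, carries $A'_{i_0}$ bijectively onto $B_{i_0,n}$, and is the identity on the rest of $M$. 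The required bijection $\tau:A'_{i_0}\to B_{i_0,n}$ is obtained by composing the canonical bijection $A'_{i_0}\to A'_{i_0,n}$ (namely $y\mapsto\seq{y,i_0,n}$) with a Hilbert-hotel bijection $A'_{i_0,n}\to A'_{i_0,n}\cup\{\seq{p,i_0,n}\}$ absorbing the single extra point; the latter requires no form of the axiom of choice, only a single element of the nonempty set $A_{i_0}$ being named (and when $b\in A'_{i_0}$ one may simply use the element of $A_{i_0}$ that $b$ itself supplies).

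Granting that each $\sigma_n$ is an automorphism, the argument concludes as follows. Since $\bar{a}$ lies in $N$ while $i_0$ and all of $A'_{i_0}$ lie outside $N$, and since the sets $\{\seq{i_0,m}\}$ and the $B_{i_0,m}$ are pairwise disjoint over $m$, we can choose $n$ so large that $\sigma_n$ fixes every $a_t$; then $\sigma_n(b)\in(I\times\om)\cup B_{i_0,n}\subseteq N$, and as $\sigma_n$ is an automorphism fixing $\bar{a}$ we get $M\models\varphi(\sigma_n(b),\bar{a})$. This provides the witness $b'=\sigma_n(b)\in N$ demanded by the Tarski--Vaught test, so $N\preceq M$.

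The one place needing care is the verification that $\sigma_n$ is genuinely an automorphism, i.e. that it preserves $I_0$ and $R_0$. Preservation of $I_0$ is immediate, since $\sigma_n$ maps $\{i_0,\seq{i_0,n}\}$ onto itself (inside $I_0$) and $A'_{i_0}$ onto $B_{i_0,n}$ (disjoint from $I_0$), fixing everything else. For $R_0$ one runs through its two defining clauses, using repeatedly that in $M$ an element of $A'_{i_0}$ occurs as a second coordinate of an $R_0$-pair only with first coordinate $i_0$, an element of $B_{i_0,n}$ only with first coordinate $\seq{i_0,n}$, and that $i_0$ and $\seq{i_0,n}$ never occur as second coordinates; since $\sigma_n$ is an involution, preservation in one direction suffices. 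This disjointness bookkeeping is routine, and it is the only mildly delicate point in the proof.
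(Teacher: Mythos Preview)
Your proof is correct, and it takes a genuinely different route from the paper's.

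The paper does not use the Tarski--Vaught test. Instead, for a given finite tuple of parameters from $N$, the paper passes to a forcing extension $V[G]$ in which $M$ and $N$ are countable, and there constructs a single isomorphism $\pi:N\to M$ fixing those parameters. The map $\pi$ is a \emph{global shift}: it acts on all indices $i\in I$ simultaneously, sending $\seq{i,h}\mapsto i$, fixing $\seq{i,n}$ for $n<h$, sending $\seq{i,n}\mapsto\seq{i,n-1}$ for $n>h$, and correspondingly mapping $B_{i,h}$ onto $A'_i$, etc. The passage to a forcing extension is precisely to avoid choice: in $V$ one cannot uniformly pick bijections $B_{i,h}\to A'_i$ for all $i$, but once $M$ and $N$ are countable a well-ordering of $M$ supplies them.

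Your argument sidesteps this difficulty by working \emph{locally}: since Tarski--Vaught needs only a witness for one $b\in M\setminus N$ at a time, you only ever touch the single index $i_0$ attached to $b$, and the one Hilbert-hotel bijection you need requires only a single (non-uniform) choice of an element of $A_{i_0}$. Your map $\sigma_n$ is an automorphism of $M$ rather than an isomorphism $N\to M$, and it is a two-point swap rather than a shift. The upshot is that your proof is more elementary---no forcing, no absoluteness appeal---while the paper's proof yields the slightly stronger statement that $N$ and $M$ are (in a suitable extension) isomorphic over any finite set of parameters from $N$.
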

\begin{proof}
Fix a formula $\varphi$,
and take $\seq{i_0,n_0},\dotsc,\seq{i_k,n_k} \in I \times \om$ and
$\seq{x_0, j_0, m_0},$ $\dotsc,\seq{x_l, j_l, m_l} \in \bigcup_{i \in I,n<\om} B_{i,n}$.
Fix a large natural number $h$ with $h>n_0,\dotsc, n_k, m_0,\dotsc, m_l$.
We work in some generic extension $V[G]$ in which $M$ and $N$ are countable.
Since $A_i'$ and $B_{i,n}$ are countably infinite,
we can find a bijection $\pi$ from $N$ onto $M$
such that:
\begin{enumerate}
\item $\pi(\seq{i,h})=i$ and $\pi(\seq{i,n})=\seq{i,n}$ for $i \in I$ and $n<h$.
\item $\pi(\seq{i,n})=\seq{i,n-1}$ for $i \in I$ and $n>h$.
\item $\pi \restriction \{\seq{i_0,n_0},\dotsc,\seq{i_k,n_k}\}$ is identity.
\item $\pi``B_{i,h}=A_i'$ for $i \in I$.
\item $\pi``B_{i,n}=B_{i,n}$ for $n<h$ and $i \in I$.
\item $\pi``B_{i,n}=B_{i,n-1}$ for $n >h$ and $i \in I$.
\item $\pi\restriction \{ \seq{x_0, j_0, m_0},$ $\dotsc,\seq{x_l, j_l, m_l} \}$ is identity.
\end{enumerate}
Then it is routine to check that $\pi$ is an isomorphism from $N$ onto $M$,
hence we have 
\begin{align*}
M & \models \varphi(\seq{i_0,n_0},\dotsc,\seq{i_k,n_k},
\seq{x_0, j_0, m_0}, \dotsc,\seq{x_l, j_l, m_l})\\
 \iff &
N \models \varphi(\seq{i_0,n_0},\dotsc,\seq{i_k,n_k},
\seq{x_0, j_0, m_0}, \dotsc,\seq{x_l, j_l, m_l}).
\end{align*}

\end{proof}
Next we define an embedding $j:M \to N$ as follows:
\begin{enumerate}
\item $j(i)=\seq{i,0}$ and $j(x)=\seq{x,i,0}$ for $i \in I$ and $x \in A_i'$.
\item $j(\seq{i,n})=\seq{i,n+1}$ and
$j(\seq{x, i,n})=\seq{x, i,n+1}$ for $ i \in I$ and $x \in A_{i,n}'$.
\end{enumerate}
\begin{claim}
$j:M \to N$ is an elementary embedding.
\end{claim}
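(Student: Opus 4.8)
The plan is to factor $j$ through an elementary submodel of $N$. Let $N'$ be the induced substructure of $N$ whose universe is $N \setminus \{\langle p,i,0\rangle \mid i \in I\}$. The first step is to check that $j$ is an \emph{isomorphism} of $M$ onto $N'$. This is the routine part: from the definition of $j$ one reads off that $j$ is injective, and that its range is exactly $N \setminus \{\langle p,i,0\rangle \mid i \in I\}$, since $j``I = I \times \{0\}$, $j(\langle i,n\rangle) = \langle i,n+1\rangle$, $j``A_i' = A_{i,0}'$ and $j``B_{i,n} = B_{i,n+1}$, and these sets together cover all of $N$ except the points $\langle p,i,0\rangle$. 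One then checks that $j$ respects the structure: $j$ sends $I_0^M = I \cup (I \times \om)$ into $I \times \om$ and its complement off $I \times \om$, and a short case split on whether the first argument lies in $I$ or in $I \times \om$ gives $M \models R_0(a,b) \iff N \models R_1(j(a),j(b))$ for all $a,b \in M$.

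The second step is to show $N' \preceq N$, by an argument entirely parallel to the proof of the preceding Claim. Fix a formula $\varphi$ and parameters $\bar b$ from $N'$ and pass to a generic extension $V[G]$ in which $N$, hence also $N'$, is countable. In $V[G]$ both $N$ and $N'$ have the same shape: infinitely many ``root'' elements (marked by the unary predicate) each carrying a countably infinite fiber, the fibers being pairwise disjoint and exhausting the non-roots; deleting one point from each $B_{i,0}$ does not change this, precisely because each $B_{i,n}$ is countably infinite. Hence in $V[G]$ one builds an isomorphism $\sigma : N \to N'$ with $\sigma \restriction \bar b$ the identity (extend the finite partial isomorphism $\mathrm{id}\restriction \bar b$ to one that also contains the root in $N$ of each non-root entry of $\bar b$, then complete it by back-and-forth). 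By absoluteness of satisfaction for set structures, the equivalence $N' \models \varphi(\bar b) \iff N \models \varphi(\bar b)$ then holds already in $V$, so $N' \preceq N$.

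Since $j : M \to N'$ is an isomorphism and $N' \preceq N$, the composite $j : M \to N$ is an elementary embedding, which proves the Claim. The one point that genuinely needs care — and the main obstacle — is that $j$ is \emph{not} onto $N$: the extra points $\langle p,i,0\rangle$ are exactly what keeps $M$ and $N$ from being isomorphic while still admitting elementary embeddings in both directions, so one must verify that discarding them is harmless, which is the content of $N' \preceq N$. (Alternatively one can avoid introducing $N'$ and, working in $V[G]$ with $M$, $N$ countable, directly build an isomorphism $\rho : M \to N$ agreeing with $j$ on $\bar a$: choose $h$ above all $\om$-coordinates appearing in $\bar a$, let $\rho$ follow $j$ on the levels $\le h$ but divert one non-parameter element of each $A_i'$ onto $\langle p,i,0\rangle$, and map the part of $M$ of level $>h$ onto the part of $N$ of level $\ge h+2$ by any level-shifting bijection; this works for the same ``countably infinite fibers'' reason.)
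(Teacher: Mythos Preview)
Your proof is correct. The factorisation through $N' = N \setminus \{\langle p,i,0\rangle \mid i \in I\}$ is clean: $j$ is visibly an isomorphism onto $N'$, and the argument that $N' \preceq N$ goes through exactly as you say, since in the collapse extension each $B_{i,0}$ and each $A'_{i,0}$ is countably infinite, so one can take the identity on the roots $I\times\omega$ and on all fibers with $n\ge 1$, and a bijection $B_{i,0}\to A'_{i,0}$ fixing the finitely many parameters for $n=0$. (The ``back-and-forth'' phrasing is more than is needed here; the direct fiberwise construction already does the job.)

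The paper argues a little differently. Rather than introducing $N'$, it observes that $j$ restricted to $M \setminus \bigcup_i A_i'$ is already a bijection onto $N \setminus \bigcup_i B_{i,0}$, and then (in the collapse extension) replaces $j$ on $\bigcup_i A_i'$ by any fiber-preserving bijection $\pi:\bigcup_i A_i' \to \bigcup_i B_{i,0}$ agreeing with $j$ on the finitely many parameters lying in $\bigcup_i A_i'$. This gives outright an isomorphism $M\to N$ agreeing with $j$ on the parameters, hence elementarity of $j$. Your parenthetical alternative is essentially this idea, though the level-shifting you describe is not needed: $j$ itself already handles every level except the mismatch between $A_i'$ and $B_{i,0}$. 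The two routes are really the same computation viewed from opposite ends --- your $\sigma^{-1}\circ j$ is the paper's $\pi\cup j\restriction(\cdots)$ --- but your decomposition makes explicit \emph{where} the failure of surjectivity sits and why it is harmless.
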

\begin{proof}
Take a generic extension $V[G]$ in which $M$ and $N$ are countable.
In $V[G]$, since each $A_i'$ and $B_{i,0}$ are countably infinite,
for every finite set $C \subseteq \bigcup_{i \in I} A'_i$,
we can take a bijection $\pi$ from $\bigcup_{i \in I} A'_i$
onto $\bigcup_{i \in I} B_{i,0}$
such that $\pi \restriction C=j\restriction C$.
Then it is easy to see that $\pi \cup j\restriction( I \cup (I\times \om) \cup \bigcup_{i \in I, n<\om} B_{i,n})$
is an isomorphism, so $j$ is an elementary embedding as before.
\end{proof}

Let $M^*=\seq{M^*; I_0^*, R_0^*}$ and
$N^*=\seq{N^*; I_1^{*}, R_1^*}$ be ultrapowers of $M$ and $N$ by $U$ respectively.
\begin{claim}
$\forall x(I_1^{*}(x) \to \exists y R_1^*(x,y))$ holds in $N^*$,
\end{claim}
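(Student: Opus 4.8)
The plan is to verify the statement directly from the $\ZF$-definition of the ultrapower's interpreted relations, exploiting the fact that the structure $N$ was built so that the sentence $\forall x(I_1(x)\to\exists y\,R_1(x,y))$ has a \emph{canonical}, choice-free Skolem witness.

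First I would recall that, in $\ZF$ and with no appeal to \L o\'s's theorem, the ultrapower $N^*={}^IN/U$ is the structure whose underlying set is $\{[h]\mid h\in{}^IN\}$ (non-empty, since $N\neq\emptyset$ and constant functions witness this) and whose relations are given by $I_1^*([h])\iff\{i\in I\mid N\models I_1(h(i))\}\in U$ and $R_1^*([h],[h'])\iff\{i\in I\mid N\models R_1(h(i),h'(i))\}\in U$. So to prove that $\forall x(I_1^*(x)\to\exists y\,R_1^*(x,y))$ holds in $N^*$ it suffices, given an arbitrary $[h]\in N^*$ with $I_1^*([h])$, to exhibit a single function $h'\in{}^IN$ with $R_1^*([h],[h'])$.

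The key step is to note that $N$ admits an honest function $F\colon N\to N$, definable in $V$ without any use of choice, with $N\models\forall x(I_1(x)\to R_1(x,F(x)))$: set $F(\seq{i,n})=\seq{p,i,n}$ for $\seq{i,n}\in I\times\om$ — this value lies in $B_{i,n}$ by the definition $B_{i,n}=A'_{i,n}\cup\{\seq{p,i,n}\}$ and is uniquely determined — and $F(y)=y$ for all other $y\in N$. Now, given $h$ with $X:=\{i\in I\mid h(i)\in I\times\om\}\in U$, the composite $h'=F\circ h$ is an element of ${}^IN$, and $R_1(h(i),h'(i))$ holds for every $i\in X$; hence $\{i\in I\mid N\models R_1(h(i),h'(i))\}\supseteq X\in U$, so $R_1^*([h],[h'])$, as required. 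Since $[h]$ was arbitrary, the claim follows.

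There is no real obstacle here beyond the observation itself. The usual reason a $\Pi_2$-sentence fails to pass to an ultrapower in $\ZF$ is precisely the need to choose, coordinate by coordinate, a witness for the inner existential quantifier; the whole design of $N$ — attaching to each $B_{i,n}$ the marked point $\seq{p,i,n}$ — was arranged to make that choice canonical, so that the witness function $F$ lives in $\ZF$. The only care required is the routine bookkeeping of checking that $F$ maps $N$ into $N$ and that $\seq{p,i,n}\in B_{i,n}$, both of which are immediate from the construction.
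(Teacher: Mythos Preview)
Your proof is correct and is essentially the same as the paper's: both use the canonical witness $\seq{p,i,n}\in B_{i,n}$ for each $\seq{i,n}\in I\times\om$, with you packaging this as a global Skolem function $F$ and the paper writing it coordinate-by-coordinate. The underlying idea is identical.
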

\begin{proof}
Take $[f] \in \prod_{i \in I}N_i/U$, and suppose
$I^{*}_1([f])$.
Then $X=\{i \in I \mid f(i) \in I_1\} \in U$.
For $i \in X$, we know that $f(i)$ is of the form 
$\seq{g(i), n}$ for some $g(i) \in I$ and $n<\om$.
Since $\seq{p,g(i), n}\in B_{g(i),n}$,
we have $h(i)=\seq{p,g(i),n} \in B_{g(i),n}$
and $R_1(f(i), h(i))$ holds.
Then clearly $R^*_1([f], [h])$ holds in $N^*$.
\end{proof}
By the assumption (4), 
$M^*$ is $\Sigma_2$-elementarily equivalent to $N^*$.
Hence 
$\forall x(I_0^{*}(x) \to \exists y R_0^*(x,y))$ holds in $M^*$.
Let $f \in {}^I M$ be the map $f(i)=i$.
We know $I^*_0([f])$ holds in $M^*$,
so there is some $[g] \in M^*$ with $R_0^*([f], [g])$ in $M^*$.
Then $X=\{i \in I \mid R_0(i, g(i))\} \in U$.
By the definition of $R_0$, $g(i)$ is an element of $A_i \times \om$ for $i\in X$.
Let $h(i) \in A_i$ be the first coordinate of $g(i)$.
Then $\{i \in I \mid h(i) \in A_i\}=X \in U$.
\end{proof}

%
Finally we show (1) $\iff$ (5) $\iff$ (6).
\begin{proof}[Proof of (1) $\iff$ (5) $\iff$ (6)]
For (1) $\Rightarrow$ (6),
take families $\{M_i \mid i \in I\}$ and $\{N_i \mid i \in I\}$ of structures
with  same language such that
each $M_i$ is isomorphic to $N_i$, and
$\prod_{i \in I} M_i, \prod_{i \in I} N_i \neq \emptyset$.
For $i \in I$, let $A_i$ be the set of all isomorphisms from $M_i$ onto $N_i$.
By the assumption (1), we can find a function $\sigma$ on $I$ such that 
$X=\{i \in I \mid \sigma(i) \in A_i\} \in U$.
Fix $h_0 \in \prod_{i \in I} N_i$.
Define $\pi:\prod_{i \in I} M_i \to \prod_{i \in I} N_i$ by
\[\pi(f)(i)=\begin{cases} \sigma(i)(f(i)) & \text{if $i \in X$,}\\
h_0(i) & \text{if $i \notin X$}.
\end{cases}\]
It is routine to check that
the assignment $[f] \mapsto [\pi(f)]$ is an isomorphism from $\prod_{i \in I} M_i/U$ onto $\prod_{i \in I} N_i/U$.
%


(6) $\Rightarrow$ (5) is clear.

For (5) $\Rightarrow$ (1), take a family $\{A_i \mid i \in I\}$ of non-empty sets.
For $i \in I$, let $A_{i}'=A_i \times \om$.
It is clear that $A_{i}'$ contains a countably infinite subset.
%

Fix two points $p, q \notin \bigcup_{ i\in I} A_{i}'$.
For $i \in I$, let $M_i$ and $N_i$ be the structures 
that $M_i=\seq{A_{i}' \cup\{p\}; p}$ and $N_i=\seq{A_{i}' \cup \{p,q\}; p}$,
where we identify $p$ as a constant symbol.
Since each $A_{i}'$ contains a countably infinite subset,
we can take a bijection from $A_i'$ onto $A_i' \cup \{q\}$.
This bijection induces an isomorphism from $M_i$ onto $N_i$.

We have $\prod_{i \in }M_i, \prod_{i \in I} N_i \neq \emptyset$,
hence $\prod_{i \in I} M_i/U=\seq{M^*; p^*}$
is $\Sigma_1$-elementarily equivalent to $\prod_{i \in I} N_i/U=\seq{N^*; p^{**}}$ by the assumption (5).
We know $\prod_{i \in I} N_i/U \models [c_q] \neq p^{**}$ (where $c_q$ is the constant function with value $q$),
so $\prod_{i \in I} N_i/U \models \exists x (x \neq p^{**})$ and
we have
$\prod_{i \in I} M_i/U \models \exists x (x \neq p^{*})$.
Pick $[g] \in \prod_{i \in I} M_i/U$
with $[g] \neq p^{*}$.
Then we have $X=\{i \in I \mid g(i) \neq p\} \in U$.
For each $i \in X$, since $g(i) \neq p$ we have $g(i) \in A_{i}'=A_i \times \om$.
Let $f(i)$ be the first coordinate of $g(i)$.
Then we have  $\{i \in I \mid f(i) \in A_i\}=X \in U$.
\end{proof}

%


\begin{note}
By the construction of ultrapowers,
in $\ZF$ it is easy to show that a structure $M$ is $\Sigma_1$-elementarily equivalent to
an ultrapower ${}^I M/U$.
In this sense, $\Sigma_2$-elementarity in (2) and (4) of Theorem \ref{thm1} is optimal.
\end{note}

Variants  of (3) and  (6) in Theorem \ref{thm1} are
still equivalent to $U$-$\AC_I$.
For sets $A$, $B$, let us say that $A$ and $B$ are \emph{equipotent}, denoted by $A \simeq B$,
if there is a bijection from $A$ onto $B$.
For an ultrafilter $U$ over $I$
and a family $\{A_i \mid i \in I\}$ of non-empty sets,
let $\prod_{i \in I} A_i/U$ be just the set $\{[f] \mid f \in \prod_{i \in I} A_i\}$,
where $[f]$ denotes the equivalence class of $f$ modulo $U$.

\begin{prop}
In $\ZF$, let $U$ be an ultrafilter over $I$.
Then the following are equivalent:
\begin{enumerate}
\item $U$-$\AC_I$ holds.
\item For every two families $\{A_i \mid i \in I\}$, $\{B_i \mid i \in I\}$ of non-empty sets,
if $A_i \simeq B_i$ for every $i \in I$ and
both $\prod_{i \in I} A_i, \prod_{i \in I} B_i$ are non-empty,
then $\prod_{ i \in I} A_i/U \simeq \prod_{i \in I} B_i/U$.
\item For every family $\{A_i \mid i \in I\}$ of pairwise equipotent sets,
if each $A_i$ has at least two elements and $\prod_{i \in I} A_i \neq \emptyset$,
then $\prod_{ i \in I} A_i/U$ has two elements.
\end{enumerate}
\end{prop}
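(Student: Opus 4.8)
The plan is to establish the cycle $(1)\Rightarrow(2)\Rightarrow(3)\Rightarrow(1)$. For $(1)\Rightarrow(2)$ I would run the evident set-theoretic analogue of the argument used for $(1)\Rightarrow(6)$ in Theorem~\ref{thm1}: given families $\{A_i\mid i\in I\}$, $\{B_i\mid i\in I\}$ with $A_i\simeq B_i$ for all $i$ and with both products non-empty, let $C_i$ be the non-empty set of bijections from $A_i$ onto $B_i$; by $U$-$\AC_I$ pick $\sigma$ on $I$ with $X=\{i\in I\mid \sigma(i)\in C_i\}\in U$, fix $g_0\in\prod_{i\in I}A_i$ and $h_0\in\prod_{i\in I}B_i$, and define $\pi\colon\prod_{i\in I}A_i\to\prod_{i\in I}B_i$ by $\pi(f)(i)=\sigma(i)(f(i))$ for $i\in X$ and $\pi(f)(i)=h_0(i)$ otherwise. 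One checks routinely that $[f]\mapsto[\pi(f)]$ is a well-defined bijection from $\prod_{i\in I}A_i/U$ onto $\prod_{i\in I}B_i/U$ (injectivity uses that each $\sigma(i)$, $i\in X$, is injective; surjectivity uses $\sigma(i)^{-1}$ on $X$ together with $g_0$ off $X$).

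For $(2)\Rightarrow(3)$ the idea is to reduce to an ultrapower of a single set. Since $U$ is proper, $I\neq\emptyset$; fix $i_0\in I$ and put $C=A_{i_0}$, so that $A_i\simeq C$ for every $i$ and $C$ has at least two elements. The constant family $\{C\mid i\in I\}$ has non-empty product, so $(2)$, applied to $\{A_i\mid i\in I\}$ and $\{C\mid i\in I\}$, yields $\prod_{i\in I}A_i/U\simeq{}^IC/U$. Picking distinct $a,b\in C$, the constant functions $c_a,c_b$ satisfy $[c_a]\neq[c_b]$ since $\{i\in I\mid a=b\}=\emptyset\notin U$; hence ${}^IC/U$, and therefore $\prod_{i\in I}A_i/U$, has two distinct elements.

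For $(3)\Rightarrow(1)$, starting from a family $\{A_i\mid i\in I\}$ of non-empty sets, I would manufacture an auxiliary pairwise equipotent family with non-empty product. Fix a limit ordinal $\theta$ with $A_i\in V_\theta$ for all $i$, and set $A_i'=(A_i\times V_\theta)\cup\{\theta\}$. Since $\theta$ is limit, $A_i\times V_\theta\subseteq V_\theta$, and $\theta\notin V_\theta$, so $\theta$ is a common element of all the $A_i'$ lying outside every $A_i\times V_\theta$. Each $A_i\times V_\theta$ is infinite (it contains $\{a\}\times\omega$ for $a\in A_i$), it injects into $V_\theta$ by inclusion, and $V_\theta$ injects into it via $v\mapsto\langle a,v\rangle$, so by Cantor--Schr\"oder--Bernstein $A_i\times V_\theta\simeq V_\theta$; adjoining one point to an infinite set preserves equipotence, so every $A_i'$ is equipotent to $V_\theta$. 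Thus $\{A_i'\mid i\in I\}$ is pairwise equipotent, each $A_i'$ has at least two elements, and the constant function $i\mapsto\theta$ witnesses $\prod_{i\in I}A_i'\neq\emptyset$. By $(3)$, $\prod_{i\in I}A_i'/U$ has two distinct elements, so at least one of them, say $[g]$, is distinct from $[c_\theta]$; then $X=\{i\in I\mid g(i)\neq\theta\}\in U$, and for $i\in X$ we have $g(i)\in A_i\times V_\theta$, so taking $f(i)$ to be the first coordinate of $g(i)$ gives $\{i\in I\mid f(i)\in A_i\}\supseteq X\in U$, as required.

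The main obstacle is the construction in $(3)\Rightarrow(1)$: the auxiliary family must simultaneously be pairwise equipotent and have a non-empty product, and these requirements pull in opposite directions — obtaining a non-empty product essentially for free forces one to adjoin a fixed common element, while equipotence forces absorbing the arbitrary sets $A_i$ into a single $V_\theta$ — and one must also observe that the ``two elements'' conclusion of $(3)$ still yields a function class different from the trivial one $[c_\theta]$, from which the selector is read off coordinatewise. By contrast $(1)\Rightarrow(2)$ is just the set version of a structure argument already carried out for Theorem~\ref{thm1}, and $(2)\Rightarrow(3)$ is an easy reduction to an ultrapower of one set.
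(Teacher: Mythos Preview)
Your argument is correct, and for $(1)\Rightarrow(2)$ and $(3)\Rightarrow(1)$ it matches the paper's proof essentially verbatim (the paper also passes through $A_i\times V_\theta$ and an adjoined point, and refers back to the proof of $(1)\Rightarrow(6)$ in Theorem~\ref{thm1} for the bijection construction). The organizational difference is that the paper does not prove the cycle $(1)\Rightarrow(2)\Rightarrow(3)\Rightarrow(1)$: instead it shows $(1)\Leftrightarrow(2)$ and $(1)\Leftrightarrow(3)$ separately, deriving $(2)\Rightarrow(1)$ by the $A_i'\cup\{p\}\simeq A_i'\cup\{p,q\}$ construction from $(5)\Rightarrow(1)$ of Theorem~\ref{thm1}, and deriving $(1)\Rightarrow(3)$ by a direct application of $U$-$\AC_I$ to the family $\{A_i\setminus\{f(i)\}\}$ for a fixed $f\in\prod_i A_i$. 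Your direct route $(2)\Rightarrow(3)$, reducing to the ultrapower ${}^IC/U$ of a single coordinate $C=A_{i_0}$ and reading off two elements from constant functions, is shorter and avoids both of those auxiliary constructions; the paper's hub-and-spoke layout, on the other hand, makes it explicit that each of $(2)$ and $(3)$ is \emph{individually} equivalent to $U$-$\AC_I$ via arguments already on hand. One wording caution: in $\ZF$ the assertion ``adjoining one point to an infinite set preserves equipotence'' is false for Dedekind-finite infinite sets, so you should say ``Dedekind-infinite'' there; your parenthetical observation that $A_i\times V_\theta$ contains $\{a\}\times\omega$ is exactly what secures this, so the argument itself is fine.
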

\begin{proof}
(1) $\Rightarrow$ (2) is immediate from (6) in Theorem \ref{thm1}.
For (2) $\Rightarrow$ (1), we repeat the proof of (5) $\Rightarrow$ (1) in Theorem \ref{thm1}.
Take a family $\{A_i \mid i \in I\}$ of non-empty sets,
and let $A_i'=A_i\times \om$.
Fix $p, q \notin \bigcup_{i \in I} A_i'$.
Then for each $i \in I$, we have $A_i' \cup \{p\} \simeq A_i' \cup \{p,q\}$.
Moreover $\prod_{i \in I} (A_i' \cup \{p\})$ and $\prod_{i \in I} (A_i' \cup \{p,q\})$ are non-empty.
By the assumption,
there is a bijection $f: \prod_{i \in I} (A_i' \cup \{p\})/U \to \prod_{i \in I} (A_i' \cup \{p,q\})/U$.
Clearly $\prod_{i \in I} (A_i' \cup \{p,q\})/U$ has at least two elements,
hence so does $\prod_{i \in I} (A_i' \cup \{p\})/U$.
Pick two elements $[f], [g] \in \prod_{i \in I} (A_i' \cup \{p\})/U$.
Then $\{i \in I \mid f(i) \neq p\} \in U$ or
$\{i \in I \mid g(i) \neq p\} \in U$.
If $\{i \in I \mid f(i) \neq p\} \in U$, then $\{i \in I \mid f(i) \in A_i \times \om\} \in U$,
and we can take a function $f'$ with $\{ i\in I \mid f'(i) \in A_i\}$.
The case $\{i \in I \mid g(i) \neq p\} \in U$ is similar.

(1) $\Rightarrow$ (3). 
For a given family $\{A_i \mid i \in I\}$,
since $\prod_{i \in I} A_i \neq \emptyset$,
we can fix $f \in \prod_{i \in I} A_i$.
For each $i \in A_i$, since $A_i$ has two elements, we have
$A_i \setminus \{f(i)\} \neq \emptyset$.
By (1), we can take  a function $g$ on $I$ with
$\{i \in I \mid g(i) \in A_i \setminus \{f(i)\}\} \in U$.
Let $h$ be the function on $I$ defined by
$h(i)=g(i)$ if $g(i) \in A_i \setminus \{f(i)\}$,
and $h(i)=f(i)$ otherwise.
We have $[f] \neq [h]$, so
$\prod_{ i \in I} A_i/U$ has two elements.

(3) $\Rightarrow$ (1).
Let $\{A_i \mid i \in I\}$ be a family of non-empty sets.
In this case we repeat the proof of (3) $\Rightarrow$ (1) in Theorem \ref{thm1}.
Fix a large limit ordinal $\theta$ with
$A_i \in V_\theta$ for every $i \in I$.
We know $A_i \times V_\theta \simeq A_j \times V_\theta$ for every $i, j \in I$.
Fix a point $p \notin \bigcup_{ i \in I} (A_i \times V_\theta)$, and
let $B_i=(A_i \times V_\theta) \cup \{p\}$.
Each $B_i$ has at least two elements,
$B_i \simeq B_j$ for $i, j \in I$, and $\prod_{i \in I} B_i \neq\emptyset$,
hence $\prod_{i \in I} B_i/U$ has two elements by (3).
In particular there is $[f]  \in \prod_{i \in I} B_i/U$ with $[f] \neq [c_p]$.
Then $\{i \in I \mid f(i) \neq p\} \in U$,
hence if $f'(i)$ is the first coordinate of $f(i)$ then
$\{i \in I \mid f'(i) \in A_i\} \in U$.
\end{proof}

\begin{cor}
In $\ZF$, the following are equivalent:
\begin{enumerate}
\item \L o\'s's theorem.
\item For every two families $\{A_i \mid i \in I\}$, $\{B_i \mid i \in I\}$ of non-empty sets
and ultrafilter $U$ over $I$,
if $A_i \simeq B_i$ for every $i \in I$ and
both $\prod_{i \in I} A_i, \prod_{i \in I} B_i$ are non-empty,
then $\prod_{ i \in I} A_i/U \simeq \prod_{i \in I} B_i/U$.
\item For every family $\{A_i \mid i \in I\}$ of pairwise equipotent sets and
ultrafilter $U$ over $I$,
if each $A_i$ has at least two elements and $\prod_{i \in I} A_i \neq \emptyset$
then $\prod_{ i \in I} A_i/U$ has two elements.
\end{enumerate}
\end{cor}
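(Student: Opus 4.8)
The plan is to read the Corollary off from the preceding Proposition in exactly the way Theorem~\ref{thm2} was read off from Theorem~\ref{thm1}. The key input is the characterization, due to \cite{U} and recalled in the introduction, that in $\ZF$ \L o\'s's theorem is equivalent to the assertion that $U$-$\AC_I$ holds for every set $I$ and every ultrafilter $U$ over $I$. So it suffices to check that each of clauses (2) and (3) of the Corollary is equivalent to that uniform statement.

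First I would note that clause (2) of the Corollary is literally the universal closure over all pairs $(I,U)$ of clause (2) of the Proposition, and likewise clause (3) of the Corollary is the universal closure over all $(I,U)$ of clause (3) of the Proposition. The Proposition asserts, for each fixed ultrafilter $U$ over a fixed set $I$, that clause (2) $\iff$ $U$-$\AC_I$ $\iff$ clause (3). Taking the conjunction of these equivalences over all $(I,U)$, clause (2) of the Corollary holds iff $U$-$\AC_I$ holds for all $I$ and $U$, iff clause (3) of the Corollary holds; and by the cited theorem of \cite{U} both are equivalent to \L o\'s's theorem, which is clause (1). This completes the argument.

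I do not expect any genuine obstacle here: the substantive content lies entirely in the Proposition and in the known characterization of \L o\'s's theorem, and the Corollary is a bookkeeping consequence. The only point worth a sentence is that the Proposition is proved uniformly in $U$ and $I$ — every implication in its proof starts from an arbitrary family indexed by the given $I$ and produces its witness for the same $U$ over the same $I$, never altering the index set or the filter — so forming the universal closure over $(I,U)$ is legitimate. The proof is therefore the one-line deduction analogous to the passage from Theorem~\ref{thm1} to Theorem~\ref{thm2}.
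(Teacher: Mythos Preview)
Your argument is correct and matches the paper's treatment: the Corollary is stated without proof, being the immediate universal closure of the preceding Proposition combined with the equivalence of \L o\'s's theorem with ``$U$-$\AC_I$ for all $I,U$'' from \cite{U}. Your additional remark that the Proposition's proof is uniform in $(I,U)$ is accurate and harmless, though not strictly needed since pointwise equivalence already yields equivalence of the universal closures.
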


\begin{question}
In the spirit of Theorem \ref{B},
the following statement would be considerable:
For every $M$ and $U$, if ${}^I M/U$ is $\Sigma_2$-elementarily equivalent to $M$,
then
${}^I M/U$ is elementarily equivalent to $M$.
In $\ZF$, is this statement equivalent to \L o\'s's theorem?
\end{question}

\section{Generic \L o\'s's theorem}
First we recall generic ultrapower and generic ultraproduct.

\begin{define}
For a filter $F$ over $I$,
let $F^+$ be the set 
$\{X \in \p(I) \mid X \cap Y \neq \emptyset$ for every $Y \in F\}$.
An element of $F^+$ is called an $F$-positive set.
Let $\bbP_F$ be the poset $F^+$
with 
the order defined by $X \le Y \iff X \setminus Y \notin F^+$.
\end{define}

If $G$ is a $(V, \bbP_F)$-generic filter, then
$G$ is a $V$-ultrafilter extending $F$,
 that is, the following hold:
\begin{enumerate}
\item $F \subseteq G$ and $\emptyset \notin G$.
\item For $X \in G$ and $Y \in \p(I)^V$, if $X \subseteq Y$ then $Y \in G$.
\item For $X, Y \in G$ we have $X \cap Y \in G$.
\item For every $X \in \p(I)^V$, either $X \in G$ or else $I \setminus X \in G$.
\end{enumerate}
For a family $\{M_i \mid i \in I\}\in V$ of structures,
we can define the equivalence relation on $(\prod_i M_i)^V$ modulo $G$
and the equivalence class $[f]$ for $f \in (\prod_{i \in I} M_i)^V$ as expected.
Hence we can construct the \emph{generic ultraproduct} $\prod_{i \in I} M_i/G$ in $V[G]$,
and, similarly,  for a structure $M \in V$, we can take the \emph{generic ultrapower} ${}^I M/G$ in $V[G]$.
Under $\mathsf{AC}$ in $V$,
we have the following generic version of \L o\'s's fundamental theorem:
For every  formula $\varphi(v_0,\dotsc, v_n)$ and $f_0,\dotsc, f_n  \in (\prod_{i \in I} M_i)^V$,
\begin{align*}
\prod_{i \in I} M_i /G & \models \varphi([f_0],\dotsc, [f_n])\\
& \iff \{i \in I \mid M_i \models \varphi(f_0(i),\dotsc, 
f_n(i) )\} \in G.
\end{align*}
In particular, under $\AC$ in $V$, the generic ultrapower ${}^I M/G$ is elementarily equivalent to $M$.


\begin{prop}\label{prop}
In $\ZF$, the following are equivalent:
\begin{enumerate}
\item The Axiom of Choice $\mathsf{AC}$.
\item For every structure $M$, filter $F$ over $I$, and 
$(V, \bbP_F)$-generic filter $G$,
the generic ultrapower ${}^I M/G$ is $\Sigma_2$-elementarily equivalent to $M$.
\item For every structure $M$, filter $F$ over $I$, and 
$(V, \bbP_F)$-generic filter $G$,
the generic ultrapower ${}^I M/G$ is elementarily equivalent to $M$.
\item For every family $\{M_i \mid i \in I\}$ of pairwise isomorphic structures with same language, filter $F$ over $I$, and 
$(V, \bbP_F)$-generic filter $G$, if $\prod_{i \in I} M_i \neq \emptyset$ then
the generic ultraproduct $\prod_{i \in I} M_i/G$ is $\Sigma_1$-elementarily equivalent to $M_i$
for every $i \in I$.
\end{enumerate}
\end{prop}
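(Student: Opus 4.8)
The plan is to close the loop $(1)\Rightarrow(3)\Rightarrow(2)\Rightarrow(1)$ and to add $(1)\Rightarrow(4)\Rightarrow(1)$. The implications out of $(1)$ are the soft ones. Assuming $\AC$ in $V$, the generic version of \L o\'s's fundamental theorem recalled above applies verbatim, so ${}^I M/G$ is \emph{fully} elementarily equivalent to $M$; this gives $(3)$, and $(3)\Rightarrow(2)$ is trivial. For $(1)\Rightarrow(4)$, apply the same generic \L o\'s theorem to $\prod_{i\in I}M_i/G$: for any sentence $\sigma$ the set $\{i : M_i\models\sigma\}$ is either $I$ or $\emptyset$ because the $M_i$ are pairwise isomorphic, so $\prod_{i\in I}M_i/G\models\sigma$ iff $M_i\models\sigma$; in particular $\Sigma_1$-elementary equivalence holds.

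The real content is $(2)\Rightarrow(1)$, which I would prove by contraposition, re-using the structure from the proof of Theorem~\ref{thm1}\,$(1)\!\iff\!(2)$ but replacing ``fixed ultrafilter'' by a tailor-made filter. Suppose $\AC$ fails and fix a family $\{A_i\mid i\in I\}$ of non-empty sets with no choice function; as usual we may assume it is pairwise disjoint with $I\cap\bigcup_{i}A_i=\emptyset$. Let $M=\langle I\cup\bigcup_{i}A_i;\,I,R\rangle$, with $I$ construed as a unary predicate and $R(x,y)\iff x\in I\wedge y\in A_x$, so the $\Pi_2$-sentence $\forall x(I(x)\to\exists y\,R(x,y))$ holds in $M$. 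For $h\in({}^I M)^V$ put $Y_h=\{i\in I : h(i)\in A_i\}$. The key observation is that, since there is no choice function, $Y_{h_0}\cup\dots\cup Y_{h_{n-1}}\neq I$ for every finite list $h_0,\dots,h_{n-1}\in({}^I M)^V$ --- otherwise, picking for each $i$ the value $h_j(i)$ for the least $j$ with $h_j(i)\in A_i$ would produce a choice function. Hence $\{I\setminus Y_h : h\in({}^I M)^V\}$ has the finite intersection property and generates a proper filter $F$ on $I$. Now take a $(V,\bbP_F)$-generic $G$ (in a suitable forcing extension); since $F\subseteq G$, we get $Y_h\notin G$ for every $h\in({}^I M)^V$. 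In ${}^I M/G$, the identity map $f=\mathrm{id}_I\in({}^I M)^V$ satisfies $I^*([f])$ (as $I\in G$), while for \emph{every} $[h]\in{}^I M/G$ one has $R^*([f],[h])\iff\{i : R(i,h(i))\}\in G\iff Y_h\in G$, which is false. Thus the $\Sigma_2$-sentence $\exists x\,(I^*(x)\wedge\forall y\,\neg R^*(x,y))$ holds in ${}^I M/G$ but fails in $M$, contradicting $(2)$.

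For $(4)\Rightarrow(1)$ I would run the same argument on the pairwise-isomorphic structures from the proof of Theorem~\ref{thm1}\,$(1)\!\iff\!(3)$: given a non-choice family $\{A_i\mid i\in I\}$ and a large limit ordinal $\theta$ with each $A_i\in V_\theta$, set $B_i=A_i\times V_\theta$ (all mutually equipotent, by Cantor--Schr\"oder--Bernstein), $S=\bigcup_j B_j$, and $M_i=\langle S; B_i\rangle$, so $\prod_{i}M_i\neq\emptyset$ and the $\Sigma_1$-sentence $\exists x\,R_i(x)$ holds in each $M_i$. For $f\in(\prod_{i}M_i)^V$ put $Y_f=\{i : f(i)\in B_i\}$; absence of a choice function again forbids any finite union $Y_{f_0}\cup\dots\cup Y_{f_{n-1}}$ from equalling $I$, so $\{I\setminus Y_f : f\in(\prod_i M_i)^V\}$ generates a proper filter $F$ on $I$. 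For a $(V,\bbP_F)$-generic $G$ no $Y_f$ lies in $G$, so $R^*([f])$ fails for every $[f]\in\prod_i M_i/G$; hence $\exists x\,R^*(x)$ fails in $\prod_{i}M_i/G$ although it holds in each $M_i$, contradicting $(4)$.

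The only genuine obstacle is the single verification highlighted above --- that ``no choice function'' yields a \emph{proper} filter, i.e.\ that finitely many partial-choice domains $Y_h$ never cover $I$ --- together with the bookkeeping that the generic ultrapower/ultraproduct is built from ground-model functions, so the $Y_h$ lie in $V$ and $F$ is an honest set-filter in $V$ to which a $(V,\bbP_F)$-generic can be adjoined. One should also remark that $F$ is non-trivial (some $Y_h\neq\emptyset$, since the $A_i$ are non-empty), so $\bbP_F$ really does force a new $V$-ultrafilter, although the argument does not formally need this.
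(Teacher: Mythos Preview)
Your proof is correct and essentially the same as the paper's: the filter you generate from $\{I\setminus Y_h : h\in({}^I M)^V\}$ coincides with Howard's filter $F=\{X\subseteq I : \{A_i\mid i\in I\setminus X\}\text{ admits a choice function}\}$ that the paper uses, and the structure $M$, the pairwise-isomorphic $M_i$, and the contradiction via $[\mathrm{id}]$ all match. The only cosmetic difference is that you present $F$ through generators and spell out the finite-intersection property (patching finitely many partial choices), whereas the paper describes $F$ directly and leaves the verification that it is a filter to the reader.
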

\begin{proof}
(1) $\Rightarrow$ (3), (4) are well-known, and (3) $\Rightarrow$ (2) is trivial.

(2) $\Rightarrow$ (1).
Our proof is based on Howard's one (\cite{H}).
Take an indexed family $\{A_i \mid i \in I\}$ of non-empty sets,
and suppose to the contrary that this family has no choice function.
Let $F$ be the set of all $X \subseteq I$ such that
$\{A_i \mid i \in I \setminus X\}$ has a choice function ($X=I$ is possible).
One can check that $F$ is a filter over $I$.
We define the structure $M$ as the following.
Let $S=I \cup \bigcup_{i \in I} A_i$,
and let $R \subseteq S^2$ be the binary relation defined by
$R(x,y) \iff x \in I$ and $y \in A_x$.
Let $M$ be the structure $\seq{S; I, R}$.
We know that the $\Pi_2$-sentence $\forall x(I(x) \to \exists yR(x,y))$ holds in $M$.

Take a $(V, \bbP_F)$-generic $G$,  and we work in $V[G]$.
Let $M^*=(M^*; I^*, R^*)$ be the generic ultrapower of $M$ by $G$.
By (2),
we have that $M^* \models \forall x(I^*(x) \to \exists yR^*(x,y))$.
If $\mathrm{id}$ is the identity map on  $I$,
we have $M^* \models I^*([\mathrm{id}])$
by the construction of the generic ultrapower.
Hence there is $f \in ({}^I M)^V$
such that
$M^*\models R^*([\mathrm{id}], [f])$.
Then $X=\{i \in I \mid R(i, f(i))\} =\{i \in I \mid f(i) \in A_i\} \in G$.
Because $f \in V$,
the family $\{A_i \mid i \in X\}$ has a choice function $f$ in $V$.
This means that $I \setminus X \in F$.
Since $F \subseteq G$, we have $I \setminus X \in G$,
but this contradicts $X \in G$.

(4) $\Rightarrow$ (1).
Take a family $\{A_i \mid i \in I\}$ of non-empty sets which
has no choice function.
Let $F$ be the filter as the above,
and let $\{M_i=(M_i; R_i) \mid i \in I\}$ the family of pairwise isomorphic structures
in the proof of  (3) $\Rightarrow$ (1) in Theorem \ref{thm1}.
Take a $(V, \bbP_F)$-generic $G$, and work in $V[G]$.
If $M^*=(M^*;R^*)$ is the generic ultraproduct $\prod_{i \in I} M_i/G$,
we have $\exists x R^*(x)$ holds in $M^*$,
hence there is $[f] \in M^*$ with $R^*([f])$ in $M^*$.
Then $f \in V$ and $\{i \in I \mid R_i(f(i)) \} \in G$,
and we can choose $g \in V$ with $\{i \in I \mid g(i) \in A_i\} \in G$.
Then  we can derive a contradiction as the above.
\end{proof}
\begin{note}
In $\ZF$, a structure $M$ is always $\Sigma_1$-elementarily equivalent to a generic ultrapower ${}^I M/G$.
Hence $\Sigma_2$-elementarity of (2) in Proposition \ref{prop} is optimal.
\end{note}
We can prove that generic versions of (4)--(6) in Theorem \ref{thm2} are equivalent to $\AC$,
and we omit the proof.

\end{document}